\numberwithin{equation}{section} \DeclareMathSizes{2}{10}{12}{13}
\newcommand*{\doublerightarrow}[2]{\mathrel{
  \settowidth{\@tempdima}{$\scriptstyle#1$}
  \settowidth{\@tempdimb}{$\scriptstyle#2$}
  \ifdim\@tempdimb>\@tempdima \@tempdima=\@tempdimb\fi
  \mathop{\vcenter{
    \offinterlineskip\ialign{\hbox to\dimexpr\@tempdima+1em{##}\cr
    \rightarrowfill\cr\noalign{\kern.5ex}
    \rightarrowfill\cr}}}\limits^{\!#1}_{\!#2}}}
\newcommand{\leftrarrows}{\mathrel{\raise.75ex\hbox{\oalign{%
  $\scriptstyle\leftarrow$\cr
  \vrule width0pt height.5ex$\hfil\scriptstyle\relbar$\cr}}}}
\newcommand{\lrightarrows}{\mathrel{\raise.75ex\hbox{\oalign{%
  $\scriptstyle\relbar$\hfil\cr
  $\scriptstyle\vrule width0pt height.5ex\smash\rightarrow$\cr}}}}
\newcommand{\Rrelbar}{\mathrel{\raise.75ex\hbox{\oalign{%
  $\scriptstyle\relbar$\cr
  \vrule width0pt height.5ex$\scriptstyle\relbar$}}}}
\def\leftrightarrowsfill@{\arrowfill@\leftrarrows\Rrelbar\lrightarrows}
\newcommand{\xleftrightarrows}[2][]{\ext@arrow 3399\leftrightarrowsfill@{#1}{#2}}
\newtheorem{thm}{Proposition}[section]
\newtheorem{Thm}[thm]{Theorem}
\newtheorem{lem}[thm]{Lemma}
\newtheorem{defn}[thm]{Definition}
\title{A note on measurings and higher order Hochschild homology of algebras}
\author{Abhishek Banerjee\footnote{Department of Mathematics, Indian Institute of Science, Bangalore, India. Email: abhishekbanerjee1313@gmail.com} $\qquad\qquad$ Surjeet Kour\footnote{Department of Mathematics, Indian Institute of Technology, Delhi, India. Email: koursurjeet@gmail.com} }
\date{}
\begin{document}

	\maketitle 
	
	\begin{abstract}
		We know that coalgebra measurings behave like generalized maps between algebras. In this note, we show that coalgebra measurings between commutative algebras induce morphisms between higher order Hochschild homology groups of algebras. By higher order Hochschild homology, we mean the the Hochschild homology groups of a commutative algebra with respect to a simplicial set as introduced by Pirashvili.
	\end{abstract}
	
	\medskip
	{MSC(2020) Subject Classification: 16T15}

	\medskip
	{Keywords: coalgebra measurings, higher order Hochschild homology}  
	
	\medskip
	\section{Introduction}

Let $K$ be a field and let $Vect_K$ be the category of vector spaces over $K$. Let $A$, $B$ be $K$-algebras. We recall (see \cite{Sweed}) that a coalgebra measuring from $A$ to $B$ is a $K$-linear map $\psi:C
	\longrightarrow Hom_k(A,B)$,  where $C$ is a $K$-coalgebra and such that
	\begin{equation}\label{2.1eq}
	\psi(s)(aa')=\sum \psi(s_{(1)})(a)\psi(s_{(2)})(a') \qquad \qquad \psi(s)(1_A)=\epsilon_C(s)\cdot 1_B
	\end{equation} for any $a$, $a'\in A$. Here, $\epsilon_C:C\longrightarrow K$ is the counit on $C$ and  the coproduct $\Delta_C$ on the coalgebra $C$ is denoted by $\Delta(s)=\sum s_{(1)}\otimes s_{(2)}$ in Sweedler notation for each $s\in C$. We will often drop the summation and write $\Delta_C(s)=s_{(1)}\otimes s_{(2)}$ for $s\in C$.  When there is no danger of confusion, we will write $\psi(s)(a)$ simply as $s(a)$ for any $s\in C$ and $a\in A$. Accordingly, we have $s(aa')=s_{(1)}(a)\cdot s_{(2)}(a')$ for any $s\in C$ and $a$, $a'\in A$. 

\smallskip
The study of coalgebra measurings provides an enrichment of the category of algebras over coalgebras. This enrichment is often called the ``Sweedler hom'' between algebras (see for instance, \cite{AJ}) and is related to the Sweedler dual (see \cite{Porst}). For more on measurings and their adaptations in various categorical frameworks, we refer the reader for instance to  \cite{Bat0}, \cite{bim1}, \cite{bim2}, \cite{Vas1}, \cite{V1}, \cite{Vas2}, \cite{Vas3}. 

\smallskip
Since coalgebra measurings behave like generalized maps between algebras, can measurings be used to induce maps between (co)homology theories of algebras? We have considered this question in  \cite{BK0}, where we used measurings to construct maps between Hochschild homology groups of algebras. In \cite{BK1}, we have shown that measurings lead to induced maps between (co)homology theories on Hopf algebroids. In \cite{TP}, Pirashvili introduced Hochschild homology groups of commutative algebras with respect to simplicial sets, which he called higher order Hochschild homology groups. In this brief note, we show that coalgebra measurings between commutative algebras induce morphisms on higher order Hochschild homology groups. 

	\section{Measurings and morphisms on higher order Hochschild homology}

	\smallskip
	When $C$ is a cocommutative coalgebra,  $\psi:C
	\longrightarrow Hom_k(A,B)$ is said to be a cocommutative measuring from $A$ to $B$.  From now onward, $A$ and $B$ will always denote commutative $K$-algebras and $C$ will always denote a cocommutative $K$-coalgebra. Let $M$ be an $A$-module and $N$ be a $B$-module. A $C$-comodule measuring (see \cite{Bat1}, \cite{V1}) from $M$ to $N$  is a $K$-linear map   $\phi:D\longrightarrow Hom_k(M,N)$,  where $D$ is a $C$-comodule and   such that
\begin{equation}\label{2.2eq}
\phi(t)(am)=\sum \psi(t_{(0)})(a)\cdot \phi(t_{(1)})(m)
\end{equation}	for $t\in D$, $a\in A$ and $m\in M$. Here, structure map of $D$ as a $C$-comodule is expressed as $\delta_D(t)=\sum t_{(0)}\otimes t_{(1)}\in C\otimes D$ for 
$t\in D$. As with coalgebra measurings between algebras, we will often write the expression in \eqref{2.2eq} simply as $t(am)=t_{(0)}(a)\cdot t_{(1)}(m)$ for $t\in D$, 
$a\in A$ and $m\in M$. 

\smallskip
In \cite{TP}, Pirashvili introduced the notion of higher order Hochschild homology of a commutative algebra $A$ with coefficients in a module $M$. Let $\Gamma$ be the category of finite sets with basepoint. We denote by $\Gamma-Mod$ the category of left $\Gamma$-modules, i.e., functors $\Gamma\longrightarrow Vect_K$. Let $Sets_\star$ be the category of pointed sets. Then if $R:\Gamma\longrightarrow Vect_K$ is a left $\Gamma$-module, $R$ can be extended to a functor that we denote by 
\begin{equation}\label{2.3ext}
\widetilde R:Sets_\star \longrightarrow Vect_K\qquad T\mapsto \underset{\Gamma\ni S\rightarrow T}{colim}\textrm{ }R(S)
\end{equation} Let $\Delta$ denote the usual simplicial category, whose objects are ordered sets $\{0,1,...,k\}$ for $k\geq 0$, and whose morphisms are order preserving maps 
(see, for instance, \cite[$\S$ 6]{Loday}). By definition, a pointed simplicial set $Y$ is a functor $Y:\Delta^{op}\longrightarrow Sets_\star$. Accordingly, given a pointed simplicial set 
$Y$ and a left $\Gamma$-module $R$, we have the composition
\begin{equation}\label{2.4e}
\Delta^{op} \xrightarrow{\qquad Y\qquad} Sets_\star \xrightarrow{\qquad \widetilde R\qquad}Vect_K
\end{equation} which gives a simplicial vector space $\widetilde R(Y_\bullet)$. 

\smallskip For $k\geq 0$, we denote by $[k]\in \Gamma$ the set $[k]:=\{0,1,...,k\}$ with basepoint $0$. If $A$ is a commutative $K$-algebra and $M$ is a $K$-module, we have a left $\Gamma$-module $\mathcal L(A,M):\Gamma\longrightarrow Vect_K$ that is determined by the associations (see Loday \cite{Lod0}):
\begin{equation}\label{e2.5}
[k]\mapsto M\otimes A^{\otimes k} 
\end{equation} For $k$, $l\geq 0$ and a  morphism $f:[k]\longrightarrow [l]$ in $\Gamma$, the induced morphism is determined as follows
\begin{equation}\label{32.6}
\begin{array}{c}
\mathcal L(A,M)(f): M\otimes A^{\otimes k} \longrightarrow  M\otimes A^{\otimes l} \qquad\qquad 
m\otimes a_1\otimes ...\otimes a_k\mapsto n\otimes b_1\otimes ...\otimes b_l\\
b_j:=\underset{i\in f^{-1}(j)}{\prod} a_i\qquad n:=m\cdot \underset{i\in f^{-1}(0)}{\prod} a_i \\
\end{array}
\end{equation} for each $1\leq j\leq l$. In particular, when $M=A$, we set $\mathcal L(A):=\mathcal L(A,A)$.

\begin{defn}\label{D2.1} (see \cite{TP}) Let $A$ be a commutative $K$-algebra, $M$ be an $A$-module and $Y$ be a pointed simplicial set. Then, the Hochschild homology groups
$HH_\bullet^Y(A,M)$ of order $Y$ for the algebra $A$ with coefficients in $M$ are given by the homologies
$\{HH_n^Y(A,M):=H_n( \widetilde{\mathcal L(A,M)}(Y_\bullet))\}_{n\geq 0}$  of the simplicial vector space obtained by the composition
\begin{equation}\label{2.7ee}
{\mathcal L(A,M)}(Y_\bullet): \Delta^{op} \xrightarrow{\qquad Y\qquad} Sets_\star \xrightarrow{\qquad \widetilde{\mathcal L(A,M)}\qquad}Vect_K
\end{equation} If $g:Y\longrightarrow Z$ is a map of pointed simplicial sets, the induced map on Hochschild homology groups is denoted by $HH^g_\bullet(A,M): HH_\bullet^Y(A,M)\longrightarrow HH_\bullet^Z(A,M)$. When $M=A$,  the Hochschild homology groups
 of order $Y$ for the algebra $A$ are denoted by $HH_\bullet^Y(A):=HH_\bullet^Y(A,A)$. 
\end{defn}  

\begin{lem}\label{L2.2} Let $\phi:D\longrightarrow Hom_k(M,N)$ be a $C$-comodule measuring from the $A$-module $M$ to the $B$-module $N$. Then for each $t\in D$, we have an induced map
\begin{equation}\label{2.9tg}
\mathcal L^\phi(t):\mathcal L(A,M)\longrightarrow \mathcal L(B,N)
\end{equation} of $\Gamma$-modules. 

\end{lem}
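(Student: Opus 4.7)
The plan is to define $\mathcal L^\phi(t)_{[k]}$ on each $[k]\in \Gamma$ by a closed formula using the iterated comodule structure of $D$, and then to verify naturality in $[k]$. Let $\delta_D^{(k)}:D\longrightarrow C^{\otimes k}\otimes D$ denote the $k$-fold iterate of $\delta_D$ applied successively to the $D$-slot; by coassociativity this is unambiguous, and I will write $\delta_D^{(k)}(t)=\sum t^{[1]}\otimes \cdots \otimes t^{[k]}\otimes t^{[k+1]}$ with $t^{[i]}\in C$ for $1\leq i\leq k$ and $t^{[k+1]}\in D$. Then define
$$
\mathcal L^\phi(t)_{[k]}\bigl(m\otimes a_1\otimes \cdots \otimes a_k\bigr):=\sum \phi(t^{[k+1]})(m)\otimes \psi(t^{[1]})(a_1)\otimes \cdots \otimes \psi(t^{[k]})(a_k),
$$
so that in particular $\mathcal L^\phi(t)_{[0]}=\phi(t):M\to N$.

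To verify naturality, fix $f:[k]\longrightarrow [l]$ in $\Gamma$ and compare the two routes around the naturality square on $m\otimes a_1\otimes \cdots \otimes a_k$. Through $\mathcal L^\phi(t)_{[l]}\circ \mathcal L(A,M)(f)$, each algebra slot $j\geq 1$ carries $\psi(\tilde t^{[j]})\bigl(\prod_{i\in f^{-1}(j)}a_i\bigr)$, which by iterated application of the measuring identity \eqref{2.1eq} and coassociativity of $\Delta_C$ rewrites as $\prod_{i\in f^{-1}(j)}\psi(u_i)(a_i)$ for appropriate $u_i\in C$ obtained from $\tilde t^{[j]}$; the $N$-slot carries $\phi(\tilde t^{[l+1]})\bigl(m\cdot \prod_{i\in f^{-1}(0)}a_i\bigr)$, which by iterated application of \eqref{2.2eq} and coassociativity of $\delta_D$ factors as $\phi(v)(m)\cdot \prod_{i\in f^{-1}(0)}\psi(u_i)(a_i)$ for suitable $v\in D$. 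The other route $\mathcal L(B,N)(f)\circ \mathcal L^\phi(t)_{[k]}$ produces exactly the same collection of tensors, possibly in a different order of the $C$-factors, and cocommutativity of $C$ supplies the reordering needed to match the two expressions slot-by-slot.

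To streamline the bookkeeping I would reduce naturality to a small set of generators of $\Gamma$: the basepoint-fixing bijections $[k]\to [k]$ (handled purely by cocommutativity of $C$); the elementary surjections $[k+1]\to [k]$ either merging two non-basepoint elements or collapsing a non-basepoint element onto the basepoint (handled by \eqref{2.1eq}, \eqref{2.2eq} and a single use of coassociativity); and the elementary injections $[k]\hookrightarrow [k+1]$ inserting a new non-basepoint element (handled by the counit identity $\psi(s)(1_A)=\epsilon_C(s)\cdot 1_B$, which places a unit in the fresh slot and collapses an extra $C$-factor against $\epsilon_C$). Every morphism of $\Gamma$ factors as a composite of these, and naturality then propagates under composition.

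The main obstacle is purely combinatorial: the two routes split $t\in D$ into $k+1$, resp.~$l+1$, factors via different patterns of $\delta_D$ and $\Delta_C$, and one must reconcile them using coassociativity together with the free permutation of $C$-factors afforded by cocommutativity. This is precisely the point at which cocommutativity of $C$ and commutativity of $A$, $B$ become indispensable; once the reordering is legitimate, the remaining ingredients are routine invocations of \eqref{2.1eq}, \eqref{2.2eq} and counitality.
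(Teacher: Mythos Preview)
Your proposal is correct and uses the same definition and the same essential ingredients (the measuring identities \eqref{2.1eq}, \eqref{2.2eq}, counitality, and cocommutativity of $C$) as the paper. The paper carries out the naturality check in one stroke for an arbitrary $f:[k]\to[l]$, writing out $\mathcal L(B,N)(f)\circ \mathcal L^\phi(t)[k]$ and $\mathcal L^\phi(t)[l]\circ \mathcal L(A,M)(f)$ explicitly and matching them using the measuring identities and cocommutativity, whereas you factor $f$ through generators of $\Gamma$ (basepoint-fixing bijections, elementary merges, elementary insertions) and verify each separately. Both routes are equivalent: the direct computation is a single, somewhat dense calculation, while your generator reduction trades that for several short checks plus the (standard) factorization claim in $\Gamma$; neither buys anything the other lacks.
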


\begin{proof} Let $t\in D$. For each $k\geq 0$, we define 
\begin{equation}\label{2.10tg}
\begin{array}{c}
\mathcal L^\phi(t)[k]:\mathcal L(A,M)[k]\longrightarrow \mathcal L(B,N)[k]\\
 m\otimes  a_1\otimes ...\otimes a_k\mapsto  \phi(t_{(0)})(m)\otimes \psi(t_{(1)})(a_1)\otimes ... \otimes \psi(t_{(k)})(a_k)= t_{(0)}(m)\otimes t_{(1)}(a_1)\otimes ... \otimes t_{(k)}(a_k)\\
\end{array}
\end{equation}
We now consider $k$, $l\geq 0$ and a morphism $f:[k]\longrightarrow [l]$ in $\Gamma$. Let $m\otimes a_1\otimes ...\otimes a_k\in  M\otimes A^{\otimes k}$.  Using the definitions in \eqref{2.2eq}, \eqref{32.6} and the fact that $C$ is cocommutative, we note that
\begin{equation}\label{nat2}
\begin{array}{l}
(\mathcal L(B,N)(f)\circ \mathcal L^\phi(t)[k])(m\otimes a_1\otimes ...\otimes a_k)\\ \qquad \qquad =\mathcal L(B,N)(f)( t_{(0)}(m)\otimes t_{(1)}(a_1)\otimes ... \otimes t_{(k)}(a_k))\\
\qquad \qquad= \left(t_{(0)}(m)\cdot \underset{i_0\in f^{-1}(0)}{\prod} t_{(i_0)}(a_{i_0}) \right)\otimes \left(\underset{i_1\in f^{-1}(1)}{\prod} t_{(i_1)}(a_{i_1})\right)\otimes ... \otimes \left(\underset{i_l\in f^{-1}(l)}{\prod} t_{(i_l)}(a_{i_l})\right)\\
\qquad \qquad = t_{(0)}\left(m\cdot \underset{i_0\in f^{-1}(0)}{\prod} a_{i_0}\right)\otimes t_{(1)}\left(\underset{i_1\in f^{-1}(1)}{\prod} a_{i_1}\right)\otimes ...\otimes t_{(l)}\left(\underset{i_l\in f^{-1}(0)}{\prod} a_{i_l}\right)\\
\qquad \qquad = \mathcal L^\phi(t)[l]\left(\left(m\cdot \underset{i_0\in f^{-1}(0)}{\prod} a_{i_0}\right)\otimes 
\left(\underset{i_1\in f^{-1}(1)}{\prod} a_{i_1}\right)\otimes ...\otimes \left(\underset{i_l\in f^{-1}(0)}{\prod} a_{i_l}\right) \right)\\
\qquad \qquad =   (\mathcal L^\phi(t)[l]\circ \mathcal L(A,M)(f))(m\otimes a_1\otimes ...\otimes a_k)\\
\end{array}
\end{equation}
From \eqref{nat2}, it is clear that the maps in \eqref{2.10tg} determine a natural transformation $\mathcal L^\phi(t):\mathcal L(A,M)\longrightarrow \mathcal L(B,N)$ of
 functors from $\Gamma$ to $Vect_K$.
\end{proof}

\begin{Thm}
\label{P2.3} Let $A$, $B$ be commutative $K$-algebras, and let $\psi:C\longrightarrow Hom_K(A,B)$ be a cocommutative measuring from $A$ to $B$.  Let $\phi:D\longrightarrow Hom_k(M,N)$ be a $C$-comodule measuring from the $A$-module $M$ to the $B$-module $N$. Let $Y$ be a pointed simplicial set. Then for each $t\in D$, there is an induced morphism 
\begin{equation}\label{2.11dc}
\phi^Y_\bullet(t):HH_\bullet^Y(A,M)\longrightarrow HH_\bullet^Y(B,N)  
\end{equation} on Hochschild homology groups of order $Y$.  Moreover, if $g:Y\longrightarrow Z$ is a map of pointed simplicial sets, we have
$HH^g_\bullet(B,N)\circ \phi^Y_\bullet(t)=\phi^Z_\bullet(t)\circ HH^g_\bullet(A,M)$.
\end{Thm}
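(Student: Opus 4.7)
The strategy is to lift the $\Gamma$-module natural transformation $\mathcal L^\phi(t):\mathcal L(A,M)\to \mathcal L(B,N)$ supplied by Lemma \ref{L2.2} through the extension procedure \eqref{2.3ext}, through simplicial evaluation on $Y$ as in \eqref{2.7ee}, and finally through the homology functor. The first step is to observe that the assignment $R\mapsto \widetilde R$ is functorial in the $\Gamma$-module argument $R$ as well as in the pointed-set variable: given any natural transformation $\eta:R\to R'$ of $\Gamma$-modules and any pointed set $T$, post-composition of the colimit cocone for $\widetilde R(T)$ in \eqref{2.3ext} with $\eta$ produces a cocone into $\widetilde{R'}(T)$, and the universal property yields a map $\widetilde{\eta}_T:\widetilde R(T)\to \widetilde{R'}(T)$ which is natural in $T$. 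Applied to $\eta:=\mathcal L^\phi(t)$, this yields a natural transformation $\widetilde{\mathcal L^\phi(t)}:\widetilde{\mathcal L(A,M)}\to \widetilde{\mathcal L(B,N)}$ of functors $Sets_\star\to Vect_K$.

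Next, I would whisker $\widetilde{\mathcal L^\phi(t)}$ with the pointed simplicial set $Y:\Delta^{op}\to Sets_\star$. This produces a morphism $\widetilde{\mathcal L(A,M)}(Y_\bullet)\to \widetilde{\mathcal L(B,N)}(Y_\bullet)$ of simplicial vector spaces, and by Definition \ref{D2.1} its $n$-th homology is the desired map $\phi^Y_n(t):HH^Y_n(A,M)\to HH^Y_n(B,N)$ of \eqref{2.11dc}. The compatibility with $g:Y\to Z$ then follows from the fact that $\widetilde{\mathcal L^\phi(t)}$ is a natural transformation of functors on $Sets_\star$: the square whose horizontal arrows are $\widetilde{\mathcal L^\phi(t)}$ evaluated at $Y_\bullet$ and at $Z_\bullet$, and whose vertical arrows are induced by $g$, commutes levelwise in $\Delta^{op}$, hence commutes as a square of morphisms of simplicial vector spaces. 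Passing to homology gives the identity $HH^g_\bullet(B,N)\circ \phi^Y_\bullet(t)=\phi^Z_\bullet(t)\circ HH^g_\bullet(A,M)$.

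The one genuinely technical point is the functoriality of $\widetilde{(-)}$ in its $\Gamma$-module argument asserted in the first step, since \eqref{2.3ext} presents $\widetilde R(T)$ as a colimit over the category of arrows $S\to T$ emanating from objects $S\in \Gamma$: one has to check that the induced map on colimits is well-defined and genuinely natural in both $R$ and $T$. This amounts to recognising $\widetilde{(-)}$ as the left Kan extension of $R$ along $\Gamma\hookrightarrow Sets_\star$ (extended along filtered colimits for $T$ not finite), after which the rest of the argument is essentially formal: no further concrete computation with $\psi$, $\phi$, or the multiplications of $A$ and $B$ is required, since all such bookkeeping was already absorbed into Lemma \ref{L2.2}.
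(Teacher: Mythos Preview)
Your proposal is correct and follows precisely the same route as the paper: invoke Lemma \ref{L2.2} to obtain the $\Gamma$-module map $\mathcal L^\phi(t)$, extend it through the colimit construction \eqref{2.3ext} to a natural transformation $\widetilde{\mathcal L^\phi(t)}$ on $Sets_\star$, whisker with $Y$ to get a map of simplicial vector spaces, and pass to homology; naturality in $g:Y\to Z$ then comes from the naturality of $\widetilde{\mathcal L^\phi(t)}$. Your write-up in fact supplies more justification for the functoriality of $R\mapsto\widetilde R$ in the $\Gamma$-module argument than the paper does, but the underlying argument is the same.
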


\begin{proof}
By Lemma \ref{L2.2}, the $C$-comodule measuring $\phi:D\longrightarrow Hom_k(M,N)$ induces a morphism $\mathcal L^\phi(t):\mathcal L(A,M)\longrightarrow \mathcal L(B,N)$ of $\Gamma$-modules for each $t\in D$, and hence a morphism of functors $\widetilde{\mathcal L^\phi(t)}:\widetilde{\mathcal L(A,M)}\longrightarrow \widetilde{\mathcal L(B,N)}$ from $Sets_\ast$ to $Vect_K$. By the definition in \eqref{2.7ee}, this induces a morphism 
$\widetilde{\mathcal L^\phi(t)}(Y_\bullet):\widetilde{\mathcal L(A,M)}(Y_\bullet)\longrightarrow \widetilde{\mathcal L(B,N)}(Y_\bullet)$ of simplicial vector spaces and hence the morphisms $\phi^Y_\bullet(t):HH_\bullet^Y(A,M)\longrightarrow HH_\bullet^Y(B,N) $. Since $\mathcal L^\phi(t)$ is a morphism of $\Gamma$-modules, it is also clear that the morphism in \eqref{2.11dc} is well behaved with respect to maps of pointed simplicial sets. 
\end{proof}

When $M=A$, $N=B$, $D=C$ and the measuring $\phi=\psi:C\longrightarrow Hom_K(A,B)$, for each $s\in C$, the maps in \eqref{2.11dc} reduce to morphisms
\begin{equation}\label{2.12dc}
\psi^Y(s):HH_\bullet^Y(A)\longrightarrow HH^Y_\bullet(B)
\end{equation}
of Hochschild homology groups of order $Y$.

	\begin{bibdiv}
		\begin{biblist}

\bib{AJ}{article}{
   author={Anel, M.},
      author={Joyal, A.},
   title={Sweedler theory for (co)algebras and the bar-cobar constructions},
   journal={arXiv:1309.6952},
   date={2013},
}
		
		\bib{BK0}{article}{
   author={Banerjee, A.},
   author={Kour, S.},
   title={On measurings of algebras over operads and homology theories},
   journal={Algebr. Geom. Topol.},
   volume={22},
   date={2022},
   number={3},
   pages={1113--1158},
}
			
			\bib{BK1}{article}{
   author={Banerjee, A.},
   author={Kour, S.},
   title={Measurings of Hopf algebroids and morphisms in cyclic (co)homology
   theories},
   journal={Adv. Math.},
   volume={442},
   date={2024},
   pages={Paper No. 109581, 48},
}

\bib{Bat0}{article}{
   author={Batchelor, M.},
   title={Difference operators, measuring coalgebras, and quantum group-like
   objects},
   journal={Adv. Math.},
   volume={105},
   date={1994},
   number={2},
   pages={190--218},
}

\bib{Bat1}{article}{
   author={Batchelor, M.},
   title={Measuring comodules---their applications},
   journal={J. Geom. Phys.},
   volume={36},
   date={2000},
   number={3-4},
   pages={251--269},
}

	\bib{bim1}{article}{
   author={Grunenfelder, L.},
   author={Mastnak, M.},
   title={On bimeasurings},
   journal={J. Pure Appl. Algebra},
   volume={204},
   date={2006},
   number={2},
   pages={258--269},
}

\bib{bim2}{article}{
   author={Grunenfelder, L.},
   author={Mastnak, M.},
   title={On bimeasurings. II},
   journal={J. Pure Appl. Algebra},
   volume={209},
   date={2007},
   number={3},
   pages={823--832},
}

\bib{Vas1}{article}{
   author={Hyland, M.},
   author={L\'{o}pez Franco, I.},
   author={Vasilakopoulou, C.},
   title={Hopf measuring comonoids and enrichment},
   journal={Proc. Lond. Math. Soc. (3)},
   volume={115},
   date={2017},
   number={5},
   pages={1118--1148},
}

\bib{V1}{article}{
   author={Hyland, M.},
   author={L\'{o}pez Franco, I.}
   author={Vasilakopoulou, C.}
   title={Measuring comodules and enrichment},
   journal={ arXiv 1703.10137},
   date={2017},
}

\bib{Lod0}{article}{
   author={Loday, J.~-L.},
   title={Op\'erations sur l'homologie cyclique des alg\`ebres commutatives},
   journal={Invent. Math.},
   volume={96},
   date={1989},
   number={1},
   pages={205--230},
}

\bib{Loday}{book}{
   author={Loday, J.~-.L},
   title={Cyclic homology},
   series={Grundlehren der mathematischen Wissenschaften},
   volume={301},
   edition={2},
   note={Appendix E by Mar\'ia O. Ronco;
   Chapter 13 by the author in collaboration with Teimuraz Pirashvili},
   publisher={Springer-Verlag, Berlin},
   date={1998},
   pages={xx+513},
}

\bib{TP}{article}{
   author={Pirashvili, T.},
   title={Hodge decomposition for higher order Hochschild homology},
   journal={Ann. Sci. \'Ecole Norm. Sup. (4)},
   volume={33},
   date={2000},
   number={2},
   pages={151--179},
}

	\bib{Porst}{article}{
   author={Porst, H.-E.},
   author={Street, R.},
   title={Generalizations of the Sweedler dual},
   journal={Appl. Categ. Structures},
   volume={24},
   date={2016},
   number={5},
   pages={619--647},
}

\bib{Sweed}{book}{
   author={Sweedler, M.~E.},
   title={Hopf algebras},
   series={Mathematics Lecture Note Series},
   publisher={W. A. Benjamin, Inc., New York},
   date={1969},
   pages={vii+336},
}

\bib{Vas2}{article}{
   author={Vasilakopoulou, C.},
   title={On enriched fibrations},
   journal={Cah. Topol. G\'{e}om. Diff\'{e}r. Cat\'{e}g.},
   volume={59},
   date={2018},
   number={4},
   pages={354--387},
}

\bib{Vas3}{article}{
   author={Vasilakopoulou, C.},
   title={Enriched duality in double categories: $\mathcal{V}$-categories and
   $\mathcal{V}$-cocategories},
   journal={J. Pure Appl. Algebra},
   volume={223},
   date={2019},
   number={7},
   pages={2889--2947},
}
			
		\end{biblist}
		
	\end{bibdiv}

\end{document}